\definecolor{myblue}{RGB}{60, 80, 197}
\theoremstyle{definition}
\newtheorem{theorem}{Theorem}
\newtheorem{corollary}{Corollary}
\newtheorem{question}{Question}
\newtheorem{lemma}{Lemma}
\theoremstyle{remark}
\newtheorem{remark}{Remark}
\newtheorem*{rep@theorem}{\rep@title}
\newcommand{\newreptheorem}[2]{%
\newenvironment{rep#1}[1]{%
 \def\rep@title{{\bf #2 \ref{##1}}}%
 \begin{rep@theorem}}%
 {\end{rep@theorem}}}
\DeclareMathOperator{\Sep}{Sep}
\DeclareMathOperator{\conj}{conj}
\DeclareMathOperator{\Sym}{Sym}
\DeclareMathOperator{\Pic}{Pic}
\DeclareMathOperator{\Ker}{Ker}
\DeclareMathOperator{\supp}{supp}
\DeclarePairedDelimiter\lr{(}{)}
\newcommand{\eqdef}{\vcentcolon=}
\newcommand{\N}{\mathbb{N}}
\newcommand{\Z}{\mathbb{Z}}
\newcommand{\R}{\mathbb{R}}
\newcommand{\PP}{\mathbb{P}}
\newcommand{\rat}{\mathbf{r}}
\pgfplotsset{width=6cm,compat=newest}
\newcommand{\rom}[1]{\uppercase\expandafter{\romannumeral #1\relax}} % римские буквы 
\renewcommand{\section}{%
  \@startsection{section}{1}%
    \z@{.7\linespacing\@plus\linespacing}{.5\linespacing}%
    {\normalfont\bfseries\centering}%
}
\renewcommand{\@seccntformat}[1]{%
  \ifcsname the#1\endcsname
    \bfseries\S\csname the#1\endcsname.\ 
  \fi
}
\begin{document}   
   \title[On finiteness properties of separating semigroup of  real curve]{On finiteness properties of separating semigroup of real curve}

  \author{Matthew Magin}

  \date{\today}

  \address{Matthew Magin: Saint Petersburg University, 7/9 Universitetskaya nab., St. Petersburg, 199034 Russia}

  \email{matheusz.magin@gmail.com}

  \thanks{This work was performed at the Saint Petersburg Leonhard Euler International Mathematical Institute and supported by the Ministry of Science and Higher Education of the Russian Federation (agreement no. 075–15–2025–343).}

  \begin{abstract}
  
  A real morphism $f$ from a real algebraic curve $X$ to $\PP^1$ is called separating if $f^{-1}(\R \PP^1) = \R X$. A separating morphism defines a covering $\R X \to \R \PP^1$. Let $X_1, \ldots, X_r$ denote the components of $\R X$. M. Kummer and K. Shaw~\cite{kummer_separating_2020} defined the separating semigroup of a curve $X$ as the set of all vectors $d(f) = (d_1(f), \ldots, d_r(f)) \in \N^{r}$ where $f$ is a separating morphism $X \to \PP^1$ and $d_i(f)$ is the degree of the restriction of $f$ to $X_i$. 
  
  In the present paper we prove that for a non-negative integer number $g$ the set of all separating semigroups of genus $g$ curves is finite. 
 
  \end{abstract}

  \maketitle

  \section{Introduction}
  
  %R Introduction, line 2: "its real point set" ("point" without "s")
  By a \emph{real curve} we mean a complex algebraic curve $X$ equipped with an antiholomorphic involution $\conj\colon X \to X$. Its real point set is $\R X \eqdef \{ p \in X \ \vert \ \conj(p)=p\}$. All curves considered here are smooth and irreducible. 
  
  \medskip 
  
  A real curve $X$ is called \emph{separating} if the space $X \setminus \R X$ is disconnected. In this case $X \setminus \R X$ consists of two connected components which are interchanged by an anti-holomorphic involution. The boundary orientation induced on $\R X$ by one of the halves of $X \setminus \R X$ is called {\em a complex orientation}. 
  
  Ahlfors~\cite{ahlfors_open_1950} proved that a real curve $X$ is separating if and only if there exists a \emph{separating morphism} $f\colon X \to \PP^1$, that is a morphism such that $f^{-1}(\R \PP^1)= \R X$. Such a morphism defines a covering map $f\vert_{\R X}\colon \R X \to \R \PP^1$. Let $X_1, \ldots, X_{r}$ denote the connected components of $\R X$. Denoting by $d_i(f)$ the degree of the restriction of $f$ to $X_i$ we may associate every separating morphism $f$ with a vector $d(f) \eqdef (d_1(f), \ldots, d_r(f))$. M. Kummer and K. Shaw~\cite{kummer_separating_2020} defined the \emph{separating semigroup} of the curve $X$ as the set of all such vectors:
  \[
    \Sep(X) \eqdef \{ d(f) \ \vert \ f\colon X \to \PP^1 \text{~--- separating morphism}\}. 
  \] 
  
  It is easy to check that $\Sep(X)$ is indeed an additive semigroup, see~\cite[Prop. 2.1]{kummer_separating_2020}. 
  
  Without going into details, let us note that   the separating semigroup has now been described for the following cases: \emph{M-curves}, i.e. real curves of genus $g$ with maximal number of components $b_0(\R X) = g+1$, (see~\cite[Theorem 1.7]{kummer_separating_2020}), curves of genus $\le 4$ (see~\cite[Theorem 1.7]{kummer_separating_2020},~\cite{orevkov_separating_2019},~\cite[Example 3.3]{orevkov_algebraically_2021},~\cite{orevkov_separating_2025}), hyperelliptic curves (see~\cite{orevkov_separating_2019} and~\cite[\S 4]{orevkov_separating_2025}) and plane quintics (see~\cite{magin_orevkov_2025}). 

  \medskip

  %%%%%%%%%%%%%%%%%%%%%%%%%%%%%%%%%%%%%%%%%%%%%%%%%%%%%%%%%%%%%%%%%%%%%%%%%%%%%%%%%%%%%%%%%%%%%%%%
  %%%%%%%%%%%%%%%%%%%%%%%%%%%%%%%%%%%%%%%%%%%%%%%%%%%%%%%%%%%%%%%%%%%%%%%%%%%%%%%%%%%%%%%%%%%%%%%%

  Before stating the main results of the paper, we introduce the following definition. A totally real divisor $P = p_1 + \ldots + p_n$ on a real curve $X$ is called {\em separating} if there exists a separating morphism $f\colon X \to \PP^1$ and a point $p_0 \in \R \PP^1$ such that $P = f^{-1}(p_0)$. In this case, we say that $P$ {\em has degree partition} $d(P) \eqdef d(f) \in \Sep(X)$, i.e. $d_i(P) \eqdef \deg(P\vert_{X_i})$. It will be more convenient in what follows to work with separating divisors rather than separating morphisms.

  \subsection*{Notation} We denote:
  \[
      \N = \{ n \in \Z \ \vert \ n \ge 1 \}, \quad \N_0 = \{ n \in \Z \ \vert \ n \ge 0 \}.
  \] %R page 2, line 4: "and" instead of "a"  
  Below $g$ always denotes the genus of a real curve $X$ and $r$ denotes the number of connected components of $\R X$. We always equip the sets $\N^r$ and $\N_0^{r}$ with the standard structure of an additive semigroup. For $d \in \N^{r}$ denote $|d|\eqdef \sum_{i=1}^{r}d_i$. 
  
  \subsection*{Main results and outline} The main goal of this article is to prove the following theorem.

  \begin{reptheorem}{main_thm}
    For a non-negative integer number $g$ the set of all separating semigroups of genus $g$ curves is finite. 
  \end{reptheorem}

  The proof of this theorem relies on two results, each of which we believe to be of independent interest.

  In particular, in \S\ref{Deleting_section} we show that from any separating divisor of sufficiently large degree one can remove a non-empty subset of its points so that the resulting divisor remains separating and contains at least half of the original points.

  \begin{reptheorem}{deleting_points}
    Let $P = p_1 + \ldots + p_n$ be a separating divisor on a real curve $X$ and suppose $n \ge g+2$. Then there exists a proper subset  $\Gamma \subsetneq \{ 1, \ldots, n\}$ such that the divisor $Q \eqdef \sum_{i \in \Gamma} p_i$ is separating and satisfies $\deg{Q} \ge \lceil \frac{n}{2} \rceil$. 
  \end{reptheorem}
  
  %R page 2, line 17: "role" instead of "rôle"
  The Abel-Jacobi theorem plays a key role in the proof of Theorem~\ref{deleting_points}. Also note that Theorem~\ref{deleting_points} immediately implies a classical result of Ahlfors  (see~\cite[\S4]{ahlfors_open_1950}): for a separating curve of genus $g$, the minimal degree of a separating morphism $X \to \PP^1$ is at most $g+1$. 

  \medskip 

  Recall that a divisor $D$ on an algebraic curve $C$ is called \emph{non-special} if $h^1(D) = 0$ and \emph{special} otherwise, where $h^i(D)$ denotes, as usual, $\dim H^i(C, \mathcal{O}_C(D))$.  By Serre duality, this is equivalent to $h^0(K_C - D) = 0$, where $K_C$ denotes the canonical divisor of $C$.  Note that if $D$ is non-special, then $D + p$ is non-special for any point $p \in C$. Indeed, $h^0(K_C - D - p) \le h^0(K_C-D)=0$, so $h^0(K_C -(D+p))=0$. 

  \medskip 

  In \S\ref{sep_structure} we prove that the separating semigroup of any real curve $X$ with $b_0(\R X) = r$ can be expressed as
     \begin{equation}\tag{$\diamondsuit$}
          \Sep(X) = \Sep_s(X) \cup \bigcup_{i=1}^{m} (d(P_i) + \N_0^{r}).  \label{sep_decomp} 
      \end{equation}
  where $\Sep_s(X)$ is a finite set of degree partitions of special separating divisors, and $d(P_i)$ are minimal (with respect to the pointwise partial order on $\Sep(X) \subset \N^r$) degree partitions corresponding to non-special separating divisors $P_1, \ldots, P_m$.   

  %R page 2, line -7: "the standard notion" (with "the")
  The decomposition~\eqref{sep_decomp} shows that the separating semigroup of any real curve  $X$ is completely determined by two finite sets: the finite set $\Sep_s(X)$ and the minimal non-special separating divisors $P_1,\ldots, P_m$. Using this description, we observe that the standard notion of finite generation is somewhat meaningless in this setting. Indeed, we prove that if $b_0(\R X) \ge 2$, then $\Sep(X)$ is {\bf not} finitely generated as a semigroup. We also provide simple examples illustrating how known descriptions of separating semigroups can be recast in this new framework.

  Finally, in \S\ref{main_theorem_proof} we prove the main Theorem~\ref{main_thm}. The key step of the proof is the uniform bound on degrees of minimal non-special separating divisors $P_1, \ldots, P_m$, which we derive using Theorem~\ref{deleting_points}.

  \section{Deleting points from separating divisors}\label{Deleting_section}

  In this section, $X$ denotes a separating real curve of genus $g$, and $\omega_1, \ldots, \omega_g$ denotes a basis of the space of holomorphic 1-forms on $X$.

  The main ingredient of the proof of Theorem~\ref{deleting_points} is the following lemma, which is a combination of the infinitesimal version of the Abel-Jacobi theorem and Lemma 2.10 from~\cite{kummer_separating_2020}. 

   \begin{lemma}[Lemma 3.2 in\footnote{In~\cite{orevkov_separating_2019} the ($\Leftarrow$) part of Lemma 3.2 assumes that the divisor is non-special. In fact, this assumption is redundant.}\cite{orevkov_separating_2019}]\label{abel-jacobi_lemma}

   Let $p_1, \ldots, p_n \in \R X$ be pairwise distinct points. Then, the divisor $P = p_1 + \ldots + p_n$ is separating if and only if there exists a tuple of positive (with respect to some complex orientation) tangent vectors $v_i \in T_{p_i}(\R X), \ i = 1, \ldots, n $  such that 

   \begin{equation}\tag{$\dagger$} 
       \sum_{i=1}^{n}\omega_k(v_i) = 0 \quad \text{ for all } k = 1, \ldots, g.  \label{abel_jacobi_lemma_eq}
    \end{equation}
  \end{lemma}

  \begin{proof}
    First note that Condition~\eqref{abel_jacobi_lemma_eq} can be reformulated as follows. The tuple $v = (v_1, \ldots, v_n)$  defines a tangent vector to $\Sym^n(X)$ at $P$. Let $\varphi \colon \Sym^n(X) \to \Pic^n(X)$ denote the Abel-Jacobi map. Then Condition~\eqref{abel_jacobi_lemma_eq}  is precisely the statement that $v \in \Ker(d_{P}\varphi)$. So, the implication $(\Rightarrow)$ follows directly from the Abel-Jacobi theorem. 

    Let us prove the implication $(\Leftarrow)$. Again, it follows from the Abel-Jacobi theorem that there exists a smooth deformation 
    \[
      p_i(t)\colon [0,1] \to \R X\colon \quad P(t) = \sum_{i=1}^n p_i(t) \in |P|, \quad  p_i(0) = p_i, \quad \frac{d}{dt}\bigg\vert_{t=0} \ p_i(t) = v_i.
    \]
    
    Since all $v_i$ are positive with respect to some complex orientation on $\R X$, for sufficiently small $t \in [0,1]$ all points $p_i(t)$ move in the direction of the complex orientation. Hence for $t_0 \ll 1$ divisors $Q = P(t_0)$ and $P$ interlace (that is, every component of $\R X \setminus Q$ contains exactly one point of $P$ and vice versa). Since $Q \in |P|$, there exists a meromorphic function $f$ such that $(f) = P - Q$. Since $Q$ and $P$ interlace, by~\cite[Lemma 2.10]{kummer_separating_2020} the meromorphic function $f$ is a separating morphism.
  \end{proof}

  \begin{theorem}\label{deleting_points}
      Let $P = p_1 + \ldots + p_n$ be a separating divisor on a real curve $X$ and suppose $n \ge g+2$. Then there exists a proper subset  $\Gamma \subsetneq \{ 1, \ldots, n\}$ such that the divisor $Q \eqdef \sum_{i \in \Gamma} p_i$ is separating and satisfies $\deg{Q} \ge \lceil \frac{n}{2} \rceil$. 
  \end{theorem}

  \begin{proof}
    Since $P$ is separating, there exist tangent vectors $v_i \in T_{p_i}(\R X), \ i = 1, \ldots, n$ satisfying the conditions of Lemma~\ref{abel-jacobi_lemma}. 

    %R Proof of Theorem 1, after (1): I would rather write "where 𝜆 is not a scalar multiple of (1,…,1). Besides, we can assume that the 𝜆𝑖 are not of the same sign". Indeed, the dimension argument is used to justify that we may pick 𝜆 not multiple of (1,…,1). Then the property on signs is a basic algebra fact not relying on dimension.

    Define the vectors $u_i = (\omega_1(v_i), \ldots, \omega_{g}(v_i)) \in \R^{g}$, $i = 1, \ldots, n$. Note that Condition~\eqref{abel_jacobi_lemma_eq} is equivalent to $\sum_{i=1}^n u_i = 0$. Since $n \ge g + 2$, the vectors $u_1, \ldots, u_n$ are linearly dependent and the space of linear relations among them has dimension at least $2$. In particular, there exists a non-trivial linear relation
    \begin{equation}
          \sum_{i=1}^{n} \lambda_i u_i = 0. \label{deleting_th_eq_1}
      \end{equation}
      where $\lambda = (\lambda_1, \ldots, \lambda_n)$ is not a scalar multiple of $(1, \ldots, 1)$. Besides, we can assume that the $\lambda_i$ are not of the same sign. 

     Using the fact that $\sum_{i=1}^n u_i = 0$, we construct another non-trivial linear combination of the vectors $u_i$ with positive coefficients that involves strictly fewer than $n$ terms. Indeed, we may rewrite~\eqref{deleting_th_eq_1} as

      \begin{equation}
        \sum_{i \in I} \alpha_i u_i - \sum_{j \in J} \beta_j u_j = 0, \text{ where } \alpha_i > 0, \ \beta_j \ge 0 \text{ and } I \sqcup J = \{ 1, \ldots, n\}, \ I \neq \varnothing, J \neq \varnothing.\label{deleting_th_eq_2}
      \end{equation}

      %R Proof of Theorem 1, page 4: For the sake of readability, I would suggest defining Γ:=𝐼⊔{𝑖∈𝐽:𝛽𝑗<𝛽𝑠} after introducing 𝛽𝑠 (instead of defining it in a footnote).
      Put $\beta_s \eqdef \max_{j \in J} \beta_j$ and define $\Gamma \eqdef I \sqcup \{ j \in  J \ \vert \ \beta_j < \beta_s\}$.  Then, it is easy to see that the linear combination
      \begin{equation}
        \sum_{i \in \Gamma} \gamma_i u_i \eqdef \sum_{i \in I} (\alpha_i + \beta_s)u_i + \sum_{j \in J} (\beta_s - \beta_j) u_j = 0 \label{deleting_th_eq_3}
      \end{equation}

      is the desired one. Note that multiplying by $-1$ the equality~\eqref{deleting_th_eq_2} we may assume that $|I| \ge \lceil \frac{n}{2}\rceil$. Then we have $\lceil \frac{n}{2} \rceil \le |I| \le |\Gamma| < n$.   

      Now observe that 
      \begin{equation}
        \sum_{i \in \Gamma} \gamma_i u_i = 0 \iff \text{ for all } k = 1, \ldots, g \quad \sum_{i \in \Gamma} \omega_k(\gamma_i v_i) = 0. \label{deleting_th_eq_4}
      \end{equation}

      Furthermore, since $\gamma_i > 0, \ i \in \Gamma$ and each $v_i$ is positive  (with respect to the chosen complex orientation on $\R X$), the rescaled vectors $w_i \eqdef \gamma_i \cdot v_i$ are also positive.

      \medskip 

      Consider the divisor $Q = \sum_{i \in \Gamma} p_i$. It follows from the above that the tuple $w = (w_i)_{i \in \Gamma}$ satisfies the conditions of Lemma~\ref{abel-jacobi_lemma}. Hence, $Q$ is a separating divisor. Moreover, since $|\Gamma| \ge \lceil n/2 \rceil$, we have $\deg Q \ge \lceil n/2 \rceil$.
  \end{proof}

  \begin{remark}
    Let us explain the geometric meaning of the proof of Theorem~\ref{deleting_points}. 
    The vectors $v_i$ appearing in Lemma~\ref{abel-jacobi_lemma} can be naturally interpreted as the velocities of the points of the divisor $P$ under an infinitesimal deformation. 
    Accordingly, Lemma~\ref{abel-jacobi_lemma} asserts that a divisor $P$ is separating if and only if it admits an infinitesimal deformation within its linear equivalence class such that all its points move in the positive direction with respect to some complex orientation on $\R X$.

    The geometric interpretation of the vectors $u_i = (\omega_1(v_i), \ldots, \omega_g(v_i))$ is as follows. 
    Let $v^{(i)} \coloneqq (0, \ldots, v_i, \ldots, 0) \in T_P \Sym^n(X)$ denote the tangent vector corresponding to moving only the point $p_i$ while keeping all other points fixed. 
    Then $u_i$ is precisely the image of $v^{(i)}$ under the differential of the Abel–Jacobi map: $u_i = d_P \varphi  \left(v^{(i)} \right)$.
  \end{remark}

  Recall that the group of divisors on $X$ admits a natural partial order defined by  
  \[
    D_1 \ge D_2 \quad \Longleftrightarrow \quad D_1 - D_2 \text{ is effective (i.e., all its coefficients are non-negative).}
  \]
  It then follows from Theorem~\ref{deleting_points} that any minimal (with respect to this order) separating divisor has degree at most $g + 1$.

  \begin{corollary}[see \S4 in~\cite{ahlfors_open_1950}]\label{ahlfors_g+1}
    Let $X$ be a separating real curve of genus $g$. Then the minimal degree of a separating morphism $f\colon X \to \PP^1$ is at most $g + 1$.
  \end{corollary}

  Note that the bound in Corollary~\ref{ahlfors_g+1} is sharp only for $M$-curves. Gabard~\cite{gabard_sur_2006} improved this bound: he showed that for a separating real curve with $b_0(\R X) = r$, the value $g+1$ can be replaced by $\frac{g+r+1}{2}$. Later, Coppens \cite{coppens_separating_2013} showed that Gabard's bound is sharp.

  \section{Structure of separating semigroups}\label{sep_structure}

  \subsection{Characterization of the separating semigroup via two finite sets}

  \begin{lemma}[see Prop. 3.2 in~\cite{kummer_separating_2020}]\label{KS_non_special}
    Let $P$ be a separating divisor on a real curve $X$. Suppose that for some point $p \notin P$ we have $h^0(P+p) > h^0(P)$. Then the divisor $P + p$ is also separating. 

    In particular, if $P$ is non-special, then $d(P) + \N_0^{r} \subset \Sep(X)$. \qed
  \end{lemma}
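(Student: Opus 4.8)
The plan is to realise $P+p$ as the polar divisor of a separating morphism obtained by a small real perturbation of $f$. Let $f\colon X\to\PP^1$ be a separating morphism with $f^{-1}(p_0)=P$; composing with a real automorphism of $\PP^1$ we may take $p_0=\infty$. Since a separating morphism restricts to a covering $\R X\to\R\PP^1$, it is unramified over $\R\PP^1$; in particular $P=p_1+\ldots+p_n$ is reduced and $f$ is unramified at every point of $\R X$. We assume $p\in\R X$ (the relevant case, since a separating divisor is totally real). As $P+p$ is a real divisor, $H^0(\mathcal O(P+p))$ carries a real structure for which $H^0(\mathcal O(P))$ is a real subspace, so from $h^0(P+p)>h^0(P)$ we obtain a \emph{real} rational function $\psi\in H^0(\mathcal O(P+p))\setminus H^0(\mathcal O(P))$: a function real on $\R X$, with poles contained in $P+p$ and a genuine (hence simple) pole at $p$. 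Set $\phi_t\eqdef f+t\psi$, $t\in\R$. At $p$ only $\psi$ is singular, and at each $p_i$ the leading coefficients of $f$ and $t\psi$ cancel for at most one value of $t$; so for all but finitely many $t$ the polar divisor of $\phi_t$ is exactly $P+p$, and it suffices to prove that $\phi_t$ is separating for all small $t\neq0$.

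\emph{A criterion.} Write $X\setminus\R X=X^+\sqcup X^-$. Directly from the definition, a real non-constant $g\colon X\to\PP^1$ is separating if and only if $\operatorname{Im}g$ has one sign throughout $X^+$ and the opposite sign throughout $X^-$ (indeed $g^{-1}(\R\PP^1)=\R X$ means $X^+\sqcup X^-$ is partitioned into the two non-empty open sets $g^{-1}(\{\operatorname{Im}>0\})$ and $g^{-1}(\{\operatorname{Im}<0\})$, each of which must then be a union of components, hence equal one of $X^\pm$). Fixing labels so that $\operatorname{Im}f>0$ on $X^+$, we must show $\operatorname{Im}\phi_t=\operatorname{Im}f+t\operatorname{Im}\psi>0$ on $X^+$; conjugation then yields $\operatorname{Im}\phi_t<0$ on $X^-$, so $\phi_t^{-1}(\R\PP^1)=\R X$.

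\emph{The estimate.} This is the crux, the difficulty being that $X^+$ is non-compact: $\operatorname{Im}f\to0$ along $\R X=\partial X^+$, while $f$ and $\psi$ blow up at the poles. I would cover $\overline{X^+}$ by a small disc around each $p_i$, a small disc around $p$, and the compact remainder. (i) Near $p_i$ the simple pole of $f$ has real leading coefficient $c_i$, and $\operatorname{Im}f>0$ on $X^+$ forces $c_i<0$; hence for small $t$ the pole of $\phi_t$ at $p_i$ still has negative leading coefficient and $\operatorname{Im}\phi_t>0$ there. (ii) Near $p$, $\phi_t$ has a simple pole with leading coefficient $tc'$, where $c'\neq0$ is the leading coefficient of $\psi$; choosing $\sign(t)=-\sign(c')$ makes this term contribute positively to $\operatorname{Im}\phi_t$ on $X^+$, while the regular part of $f$ at $p$ has strictly positive derivative along $\R X$ (unramified plus separating), which keeps $\operatorname{Im}\phi_t>0$ on a \emph{fixed} half-disc around $p$. (iii) On the compact remainder $f$ and $\psi$ are holomorphic; splitting it into a neighbourhood of $\R X$ — where $\operatorname{Im}f$ vanishes to first order with strictly positive speed and so dominates the bounded term $t\operatorname{Im}\psi$ — and the rest — where $\operatorname{Im}f$ is bounded below by a positive constant — the inequality holds once $|t|$ is small. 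Hence $\operatorname{Im}\phi_t>0$ on $X^+$, so $\phi_t$ is a separating morphism with $\phi_t^{-1}(\infty)=P+p$; thus $P+p$ is separating, and $d(P+p)=d(P)+e_j$, where $e_j$ is the standard basis vector of the component $X_j$ containing $p$.

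\emph{The last assertion, and the obstacle.} If $h^1(P)=0$, then by Riemann--Roch $h^0(P+q)=h^0(P)+1$ for every $q\in\R X$, and $P+q$ is again non-special; iterating and placing the added points in the various components $X_1,\ldots,X_r$ shows $P+q_1+\ldots+q_k$ is separating for all real $q_i$, i.e. $d(P)+\N_0^{r}\subset\Sep(X)$. I expect the one genuinely delicate point to be the uniformity in steps (ii)--(iii): the region near $p$ on which the new pole dominates shrinks as $t\to0$, so away from a fixed disc around $p$ one must instead rely on the first-order vanishing of $\operatorname{Im}f$ along $\R X$ with positive speed — precisely what unramifiedness and separating-ness of $f$ guarantee.
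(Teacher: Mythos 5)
The paper offers no proof of this lemma --- it is quoted directly from \cite{kummer_separating_2020}, Prop.~3.2 --- so the only comparison available is with the source, and your argument (perturbing $f$ to $\phi_t=f+t\psi$ with $\psi$ a real section of $\mathcal O(P+p)$ having a pole at $p$, together with the characterization of separating morphisms by the constant sign of $\operatorname{Im}$ on the two halves of $X\setminus\R X$, and the three-region estimate using that $\operatorname{Im}f$ vanishes to first order with nonzero speed along $\R X$ because $f$ is unramified there) is correct and is essentially the argument of Kummer and Shaw. The only point I would tighten is the claim that a covering $\R X\to\R\PP^1$ forces unramifiedness (odd-order ramification is still a local homeomorphism of real curves); the correct reason is that a ramification point of index $k\ge 2$ on $\R X$ would make $f^{-1}(\R\PP^1)$ contain $2k>2$ local branches, contradicting $f^{-1}(\R\PP^1)=\R X$.
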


  The following elementary lemma is proven, for instance, in~\cite[Prop. 2.1]{huisman_non-special_2003}. For the reader's convenience, we include here a proof which we find illustrative.   
    
  \begin{lemma}\label{canonical_even_degree}
  Let $X$ be a real curve and $P$ a canonical divisor on $X$. Then $P$ has even degree on every connected component of $\R X$.
  \end{lemma}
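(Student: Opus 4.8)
The plan is to represent $P$ by a real meromorphic differential and then argue one component of $\R X$ at a time by counting sign changes along a circle. First I would reduce to the case $P=(\omega)$ for a \emph{real} meromorphic differential $\omega$, i.e.\ one with $\overline{\conj^{*}\omega}=\omega$. The canonical class is $\conj$-invariant and contains the divisor of such a form: if $\eta$ is any nonzero meromorphic differential, then $\overline{\conj^{*}\eta}$ is again one, and at least one of $\eta+\overline{\conj^{*}\eta}$ and $i\,(\eta-\overline{\conj^{*}\eta})$ is a nonzero real meromorphic differential $\omega_{0}$. Hence a real divisor $P$ in the canonical class can be written $P=(\varphi\,\omega_{0})$ for a meromorphic function $\varphi$; since $P$ is $\conj$-invariant, $\overline{\conj^{*}(\varphi\omega_{0})}$ has divisor $P$, so it equals $c\,\varphi\omega_{0}$ for a constant with $|c|=1$ (apply the relation twice), and after rescaling $\varphi\omega_{0}$ by a suitable unit constant we obtain a real meromorphic differential $\omega$ with $(\omega)=P$. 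This is all that is needed below; in particular it covers every effective canonical divisor.

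Next, fix a component $X_{i}$ of $\R X$; topologically it is a circle. Choose an orientation and a nowhere-vanishing real-analytic $1$-form $\alpha$ on $X_{i}$ (the cotangent bundle of $S^{1}$ is trivial, e.g.\ $\alpha=d\theta$), and set $h\eqdef \omega|_{X_{i}}/\alpha$. Then $h$ is a real-valued function on $X_{i}$, real-analytic away from the finitely many poles of $\omega$. Writing $\omega=z^{k}\psi(z)\,dz$ in a holomorphic coordinate $z$ at a real point $p$ for which $\conj$ acts as $z\mapsto\bar{z}$ (so that $X_{i}$ is locally $\{z=\bar{z}\}$ and $\psi$ has real Taylor coefficients with $\psi(0)\neq 0$), one reads off $\operatorname{ord}_{p}h=k=\operatorname{ord}_{p}\omega$. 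Consequently
\[
\deg\bigl(P|_{X_{i}}\bigr)=\sum_{p\in X_{i}}\operatorname{ord}_{p}\omega=\sum_{p\in X_{i}}\operatorname{ord}_{p}h .
\]

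Finally I would count the sign changes of $h$. Away from its zeros and poles $h$ has locally constant sign; across a zero or a pole of odd order it changes sign, across one of even order it does not. Traversing the circle $X_{i}$ once, the sign of $h$ returns to its initial value, so the total number of sign changes is even, and that number is exactly the number of zeros and poles of $h$ of odd order. Therefore
\[
\deg\bigl(P|_{X_{i}}\bigr)=\sum_{p\in X_{i}}\operatorname{ord}_{p}h\ \equiv\ \#\{\,p\in X_{i}:\operatorname{ord}_{p}h\ \text{odd}\,\}\ \equiv\ 0\pmod 2 .
\]
The substantive content is this last observation --- on a circle a real function with isolated zeros and poles has an even number of honest sign changes, equivalently a meromorphic section of the trivial real line bundle $T^{*}X_{i}$ has even total order. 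The only steps that require care, and which I expect to be the main (if modest) obstacle, are the reduction to a real differential in the first paragraph and the identification $\operatorname{ord}_{p}\omega=\operatorname{ord}_{p}(\omega|_{X_{i}})$; the circle argument itself is then immediate.
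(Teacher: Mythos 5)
Your proof is correct and follows essentially the same route as the paper: represent $P$ as the divisor of a \emph{real} meromorphic differential $\omega$ and count the sign changes (equivalently, the orientation changes induced by $\omega$) along each circle component of $\R X$, which must be even in number and occur exactly at the points of odd multiplicity. You merely spell out two steps the paper leaves implicit --- the averaging/rescaling argument producing a real form with divisor $P$, and the identification of $\operatorname{ord}_p\omega$ with the order of vanishing of its restriction to $\R X$.
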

    \begin{proof}
       Since $P$ is canonical, it is the divisor of a meromorphic 1-form $\omega$ on $X$. We may choose $\omega$ to be real. The form $\omega$ does not vanish on $\R X \setminus \supp(P)$, and thus defines an orientation on this set. Let $X_i$ be a connected component of $\R X$.  The complement $X_i \setminus \supp(P)$ is a disjoint union of open arcs; two adjacent arcs are oriented in the same direction if and only if the multiplicity of their common boundary point in $P$ is even.  Since $X_i$ is a topological circle, the total number of orientation reversals along $X_i$ must be even. Consequently, the sum of the multiplicities of $P$ on $X_i$, i.e., $\deg(P|_{X_i})$, is even.
  \end{proof}

  To prove the next lemma, we introduce the pointwise partial order $\preceq$ on $\N^r$.  That is, for $u, v \in \N^r$, we say that $u \preceq v$ if $u_i \le v_i$ for all $i = 1, \ldots, r$.

  %R Lemma 4, statement: where Sep_s(X) is "the" finite set of... and d(P_i) are (without "the").
  \begin{lemma}\label{main_lemma}
  The separating semigroup of any real curve $X$ of genus $g$ with $b_0(\R X) = r$ admits the decomposition
  \[
    \Sep(X) = \Sep_s(X) \,\cup\, \bigcup_{i=1}^{m} \bigl( d(P_i) + \N_0^{\,r} \bigr),
  \]
  where $\Sep_s(X)$ is the finite set of degree partitions of special separating divisors, and $d(P_i)$ are the minimal (with respect to the pointwise partial order $\preceq$) degree partitions  corresponding to non-special separating divisors $P_i$.

  %R  In Lemma 4 and its proof, I believe that the upper bound for |Sep_s(X)| is \binom{2g-3}{r}+\binom{g-2}{r-1} (with g-2 instead of g-1).
  Moreover, we have
  \[
    |\Sep_s(X)| \le \binom{2g - 3}{r} + \binom{g - 2}{r - 1}.
  \]
  \end{lemma}
   \begin{proof}
   Indeed, the semigroup $\Sep(X)$ decomposes as
  \[
    \Sep(X) = \{ d(P) \mid P \text{ is separating} \} = \Sep_s(X) \sqcup \Sep_n(X),
  \]
  where
  \begin{itemize}
    \item $\Sep_s(X) \coloneqq \{ d(P) \mid P \text{ is separating and special} \},$
    \item $\Sep_n(X) \coloneqq \{ d(P) \mid P \text{ is separating and non-special} \}.$
  \end{itemize}

    Since $\deg K_X = 2g - 2$, it follows from Serre duality that any effective divisor of degree greater than $2g - 2$ is non-special. Hence, for every $d \in \Sep_s(X)$ we have $|d| \le 2g - 2$.  Moreover, any special divisor of degree $2g - 2$ is canonical. By Lemma~\ref{canonical_even_degree}, a canonical divisor has even degree on each connected component of $\R X$. Consequently, we obtain the bound
    \begin{equation}\tag{$\diamond$}
      |\Sep_s(X)| \le \binom{2g - 3}{r} + \binom{g - 2}{r - 1}.\label{ineq_sep}
    \end{equation}

    The first term on the right-hand side of~\eqref{ineq_sep} counts the number of tuples $(d_1, \ldots, d_r) \in \N^r$ with $\sum_{i=1}^r d_i \le 2g - 3$, while the second term counts those with $\sum_{i=1}^r d_i = 2g - 2$ and all $d_i$ even.
        
    Now consider the set $\Sep_n(X)$.  Let $d_1, \dots, d_m$ be the minimal elements of $\Sep_n(X)$ with respect to the pointwise partial order $\preceq$, there are finitely many of them due to  Dickson's lemma~\cite{Dickson_1913}.\footnote{We will later give an estimate for $m$.} Let $P_1, \ldots, P_m$ be separating divisors such that $d(P_i) = d_i$ for $i = 1, \ldots, m$.  Since each $P_i$ is non-special, Lemma~\ref{KS_non_special} implies that $d(P_i) + \N_0^{\,r} \subset \Sep(X)$ for all $i = 1, \ldots, m$.  Moreover, every element of $\Sep_n(X)$ dominates some $d(P_i)$, and hence
    \[
      \Sep_n(X) = \bigcup_{i=1}^{m} \bigl( d(P_i) + \N_0^{\,r} \bigr).
    \]
  \end{proof}   

  The proof of Lemma~\ref{main_lemma} raises the following two questions.

  \begin{question}
    Is it possible that, for some real curve $X$, a degree partition $d \in \Sep(X)$ is realized by two separating divisors $P$ and $Q$ such that $P$ is special and $Q$ is non-special?
  \end{question}

  \begin{question}
  Suppose that a canonical divisor $P$ on $X$ is separating. Which degree partitions $d(P) = (d_1, \ldots, d_r)$ can occur? We know that $|d(P)| = 2g - 2$ and that each $d_i$ is even by Lemma~\ref{canonical_even_degree}. Are there any other restrictions?
  \end{question}  

  \subsection{Infinite generation}

  Note that Lemma~\ref{main_lemma} demonstrates that the usual notion of finite generation is of limited relevance in this context.

  \begin{corollary}\label{infin_gen}
  For any separating real curve $X$ with $r = b_0(\R X) \ge 2$, the semigroup $\Sep(X)$ is not finitely generated.
  \end{corollary}

  \begin{remark}
  The case $r = 1$ is uninteresting, since every additive subsemigroup of $\N$ is finitely generated.
  \end{remark}

  \begin{proof}[Proof of corollary~\ref{infin_gen}]
    First, note that the case of arbitrary $r > 2$ reduces to the case $r = 2$ via the projection
\[
    \pi \colon \N^r \to \N^2, \quad \pi(x_1, \ldots, x_r) = (x_1, x_2).
  \]
  Therefore, we assume that $r = 2$. By Lemma~\ref{main_lemma}, we have
  \[
    \Sep(X) = \Sep_s(X) \,\cup\, \bigcup_{i=1}^{m} \left( d^{(i)} + \N_0^{2} \right).
  \]

  Choose a vector $d^{(j)}$ with minimal first coordinate. Without loss of generality, we may assume that it is $d^{(1)}$.  Consider $d = d^{(1)} + (0,k)$, where $k \in \N$ will be chosen later.  Evidently, $d$ cannot be written as a sum of elements of $\Sep(X)$ whose terms include vectors from $\Sep_n(X)$.

    For $a = (a_1,a_2) \in \N^2$ denote $\rat(a) \eqdef a_2/a_1$. Note that the following elementary lemma holds.
    
    \begin{lemma}\label{lemma_1} 
      For all $a, b \in \N^2$, one has $\rat(a+b) \le \max(\rat(a), \rat(b))$. 
    \end{lemma}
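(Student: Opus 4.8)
The plan is to recognize Lemma~\ref{lemma_1} as the classical \emph{mediant inequality}: the fraction $(a_2+b_2)/(a_1+b_1)$ always lies between $a_2/a_1$ and $b_2/b_1$ when all four quantities $a_1, a_2, b_1, b_2$ are positive. Since $a, b \in \N^r$, all coordinates are positive integers, so clearing denominators is harmless and the proof reduces to a single cross-multiplication.

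Concretely, I would first reduce to the case $\rat(a) \le \rat(b)$ by symmetry of the statement in $a$ and $b$; it then suffices to prove $\rat(a+b) \le \rat(b)$. Writing out $\rat(a) \le \rat(b)$ as $a_2 b_1 \le a_1 b_2$ (legitimate since $a_1, b_1 \ge 1 > 0$), I would then expand the desired inequality $\tfrac{a_2+b_2}{a_1+b_1} \le \tfrac{b_2}{b_1}$, again clearing the positive denominators $a_1+b_1$ and $b_1$, to obtain $b_1(a_2+b_2) \le b_2(a_1+b_1)$. After cancelling the common term $b_1 b_2$ from both sides, this is precisely $a_2 b_1 \le a_1 b_2$, i.e.\ the assumption. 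Hence $\rat(a+b) \le \rat(b) = \max(\rat(a), \rat(b))$, as claimed.

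There is essentially no obstacle here: the only thing to be careful about is that all denominators appearing are strictly positive, which is guaranteed because $a, b \in \N^r$ rather than merely $\N_0^r$, so every cross-multiplication preserves the direction of the inequality. The lemma is stated separately only because it will be applied repeatedly in the argument that follows (to control $\rat$ along sums of separating degree partitions), so keeping it as an isolated one-line fact is the cleanest presentation.
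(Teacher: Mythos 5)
Your proof is correct and is exactly the standard mediant-inequality argument that the paper has in mind (the paper itself omits the proof entirely, calling the lemma elementary). The cross-multiplication is justified precisely as you say, because $a,b\in\N^r$ guarantees $a_1,b_1\ge 1$, so nothing further is needed.
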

    
  Now suppose, for a contradiction, that $d$ belongs to the subsemigroup generated by $\Sep_s(X)$. 
    Write $\Sep_s(X) = \{ e^{(1)}, \ldots, e^{(m)} \}$ and $d = \sum_{j=1}^{m} \alpha_j \cdot e^{(j)}$ for some $\alpha_j \in \N$. By Lemma~\ref{lemma_1} we have 
    \begin{equation}\tag{$\star$}
        \rat(d) = \rat\lr*{\sum_{j=1}^{m} \alpha_j \cdot e^{(j)} } \le \max_{j=1,\ldots,m}\lr*{ \rat\lr*{\alpha_j \cdot e^{(j)} }} = \max_{j=1,\ldots,m}\rat\lr*{e^{(j)} } \eqdef C. \label{ineq_1}
    \end{equation}
    
  On the other hand, by choosing $k$ large enough, we can make $\rat(d)$ arbitrarily large, which contradicts~\eqref{ineq_1}.  
  \end{proof}

  \subsection{Examples}\label{examples}

    As noted in the introduction, Lemma~\ref{main_lemma} shows that to describe the separating semigroup of a real curve $X$, it suffices to describe two finite sets: $\Sep_s(X)$ and the divisors $P_1, \ldots, P_m$ (in the notation of Lemma~\ref{main_lemma}).

    To illustrate this decomposition, we present several explicit examples in simple cases.
    
    \begin{enumerate}
      \item If $X$ is an $M$-curve, $\Sep_s(X) = \varnothing$ (see~\cite{huisman_geometry_2001}) and 
      \[
        \Sep(X) = (1, \ldots, 1) + \N_0^{g+1}.
      \]

      \item Let $X$ be a separating curve of genus 2. Then, as shown in~\cite[Example 2.5]{kummer_separating_2020}
      \[
        \Sep(X) = 2 + \N_0 = \{ 2 \}  \cup \left( 3 + \N_0 \right),
      \]
      where $\Sep_s(X) = \{ 2 \}$ corresponds to the hyperelliptic projection, which in this case coincides with the canonical map.

      \item Let $X$ be a separating non-hyperelliptic non-$M$-curve of genus 3. Then $X$ is a {\em hyperbolic quartic}. That is a plane real curve of degree 4 with two nested ovals. We label the ovals from inner to outer. As it shown in~\cite{orevkov_separating_2019}\footnote{Simplified proof using completely different argument can be found in~\cite[Example 3.3]{orevkov_algebraically_2021}}, $\Sep(X) = (1, 2)+\N_0^2$. It is easy to see that in this case $\Sep_s(X) = \{ (1, 2), (2, 2) \}$ and, consequently, 
      \[
        \Sep(X) = \{ (1, 2), (2, 2)\} \cup \left( (1, 3)+ \N_0^2 \right) \cup \left( (3, 2) + \N_0^2 \right). 
      \]

      Indeed, since every divisor of degree greater than $4 = 2\cdot 3 - 2$ on a curve of genus~3 is non-special, it remains only to show that the degree partition $(3,1)$ cannot arise from a special divisor. 

      This follows from the fact that the canonical linear system on a plane quartic is cut out by lines, while a divisor of degree partition $(3,1)$ cannot lie on a real line. Indeed, the intersection number of a line with an oval is even; hence such line  would have to intersect $X$ in at least $4+2 = 6$ points (counted with multiplicity) which is impossible. 

       Realizations of the divisors corresponding to the degree partitions $(1,3)$ and $(3,2)$ can be found in~\cite[Example 3.7]{kummer_separating_2020}.  
    \end{enumerate}

  \section{Proof of the main theorem}\label{main_theorem_proof}
  
  Now we are ready to prove the main theorem: 
  
  %R In the proof of Theorem 2, I suggest adding details concerning Sep_s(X). Indeed, the statement of Lemma 4 only states that, for each X, |Sep_s(X)| is finite. In order to conclude, it seems to me that we need to use that if d is in Sep_s(X) then |d|<=2g-2 (which is stronger than the statement of Lemma 4).

  \begin{theorem}\label{main_thm}
    For a non-negative integer number $g$ the set of all separating semigroups of genus $g$ curves is finite. 
  \end{theorem}
  \begin{proof} 
    We may assume that $g \ge 2$, as the result for $g \le 1$ follows from the known classification (see the introduction). By Harnack's inequality, any real curve $X$ of genus $g$ satisfies $b_0(\R X) \le g + 1$.  Hence, it suffices to prove the claim for fixed $g$ and $r = b_0(\R X)$.

    Let $X$ be a real curve of genus $g$ with $b_0(\R X) = r$.  By Lemma~\ref{main_lemma}, its separating semigroup admits the decomposition
    \[
      \Sep(X) = \Sep_s(X) \,\cup\, \bigcup_{i=1}^{m} \bigl( d(P_i) + \N_0^{\,r} \bigr).
    \]

    Since for every $d \in \Sep_s(X)$ we have $|d| \le 2g - 2$, there are only finitely many possibilities for $\Sep_s(X)$. Thus, in order to prove the theorem, it suffices to obtain a uniform bound on the degrees of the minimal non-special divisors $P_1, \ldots, P_m$.

    \begin{lemma}\label{4g-3_lemma}
    For $g \ge 2$, one has $\deg P_i \le 4g - 3$.
    \end{lemma}

    \begin{proof}[Proof of Lemma~\ref{4g-3_lemma}]
    Suppose, for a contradiction, that $\deg P_i \ge 4g - 2$. 
    Since $g \ge 2$, we have $4g - 2 \ge g + 2$. 
    Then, by Theorem~\ref{deleting_points}, there exists a separating divisor $Q_i$ such that $Q_i \le P_i$, $Q_i \ne P_i$, and
    \[
      \deg Q_i \ge \left\lceil \frac{\deg P_i}{2} \right\rceil \ge 2g - 1.
    \]
    Hence, $Q_i$ is non-special (as $\deg Q_i \ge 2g - 1 > 2g - 2$) and separating, contradicting the minimality of $P_i$.
    \end{proof}

    It follows from Lemma~\ref{4g-3_lemma} that the number $m$ of minimal non-special divisors satisfies 
    \[m \le (4g - 3)^r.\]
      Consequently, for fixed $g$ and $r$, there are only finitely many possible degree partitions $d(P_i)$, which implies the claim of Theorem~\ref{main_thm}.

    \end{proof}

    \begin{remark}
    As the known examples show (see, e.g., item~(3) in \S~\ref{examples}), the bound $\deg P_i \le 4g - 3$ is far from sharp. This leads us to the following question.
    \end{remark}

    \begin{question}
    What is the optimal bound (in terms of $g$ and $r$) for the degree of a minimal non-special separating divisor?
    \end{question}

    \subsection*{Acknowledgements}
    This paper was inspired by a question posed to me by Sergei O. Gorchinskiy after my talk at the Vth Conference of Mathematical Centers of Russia, concerning the description of the separating semigroup in finite terms. Further development of this work, and, in particular, the proof of the main theorem was stimulated by a question from Olivier Benoist, which I received after the publication of the first version of the preprint. I am grateful to both of them for these questions.

    I am deeply indebted to Grigory B. Mikhalkin for his interest in this work, fruitful discussions, and valuable comments. I also thank Fedya Ushakov for helpful conversations and the anonymous referee for their suggestions that significantly improved the exposition of this paper.

    \bibliographystyle{alphaurl}
   \bibliography{reference}

\end{document}